\newtheorem{thm}{Theorem}[section]
\newtheorem{cor}[thm]{Corollary}
\newtheorem{lemma}[thm]{Lemma}
\newtheorem{ex}[thm]{Example}
\newtheorem{prop}[thm]{Proposition}
\theoremstyle{definition}
\newtheorem{defn}[thm]{Definition}
\newtheorem{remark}[thm]{Remark}
\theoremstyle{question}
\theoremstyle{Conjecture}
\newtheorem{con}[thm]{Conjecture}
\theoremstyle{Problem}
\numberwithin{equation}{section}
\begin{document}

\title[ the same number of centralizers]{ On the conjecture of groups with  the same number of centralizers}%
\author{N. Ahmadkhah and M. Zarrin}%

\address{Department of Mathematics, University of Kurdistan, P.O. Box: 416, Sanandaj, Iran}%
 \email{N.Ahmadkhah20@gmail.com}
 \address{Department of Mathematics, Texas State University, 601 University Drive, San Marcos, TX, 78666, USA}
 \email{m.zarrin@txstate.edu}
\begin{abstract}
For any group G, let $cent(G)$ denote the set of all centralizers of $G$. The authors in \cite{KZ}, Groups with the same number of centralizers,
 J. Algebra Appl. (2021) 2150012 (6 pages), posed  the following conjecture: 	Let $G$ and $S$ be finite groups. 
 Is it true that if $|Cent(G)|=|Cent(S)|$ and $|G'|=|S'|$, then $G$ is isoclonic to $S$? In this paper, among other things, we give a negative answer to this conjecture.\\
{\bf Keywords}. Centralizers; Isoclinic. \\
{\bf Mathematics Subject Classification (2020)}. 20D60.\\

\end{abstract}
\maketitle

\section{\textbf{ Introduction}}

Throughout this paper all groups mentioned are assumed to be finite and we will use usual notation; for example, for a group $G$ and its subgroup $H$, we define sets $Cent(G)=\{C_G(x) \mid x\in G\}$ and $Cent_G(H)=\{C_G(h)\mid h\in H\}$, where $C_G(h)=\{g\in G \mid gh = hg\}$ is the centralizer of $h$ in $G$ and we say $G$ is an $\mathcal C_n$-group if $|Cent(G)|=n$. Since for every $x\in G$, $C_G(x)\cap H=C_H(x)$, so $|Cent(H)| \le |Cent_G(H)|\le |Cent(G)|$. It is clear that $G$ is an $\mathcal C_1$-group if and only if it is abelian. Belcastro and Sherman in \cite{BS} proved that there is no n-centralizer groups for $n=2,3$ and they characterized
finite $\mathcal C_n$-groups, for $n=4,5$. Other authors investigated the $\mathcal C_n$-groups, for $n =6, 7, 8, 9, 10$. (See for instance \cite{{AJH},{A}, {JMR}, {JMR1}, {Z1}, {Z4}} ).  
In \cite{AT}, Ashrafi and Taeri, studied the influence of $|Cent(G)|$ on the structure of a group $G$ and they raised the following question:\\
 Let G and H be finite simple groups. Is it true that if $|Cent(G)|=|Cent(H)|$, then $G$ is isomorphic to $H$? Zarrin in \cite{Z1} disproved their question with a counterexample and
  he in \cite{Z2} showed that for every two isoclinic groups $G$ and $S$, $|Cent(G)|=|Cent(S)|$ but the conversely is not true (see \cite{Z1}) (note that two simple groups are isomorphic if and only if they are isoclinic.).\\
 In what follows, the authors in \cite{KZ} raised the following Conjecture:
  
  \begin{con}\label{c}
  	Let $G$ and $S$ be finite groups. Is it true that if $|Cent(G)|=|Cent(S)|$ and $|G'|=|S'|$, then $G$ is isoclonic to $S$?
  \end{con}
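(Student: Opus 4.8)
The plan is to answer Conjecture \ref{c} in the negative by exhibiting an explicit pair of finite groups that agree on both invariants $|Cent(\cdot)|$ and $|\cdot'|$ yet fail to be isoclinic. The search should be organized around the cheapest distinguishing invariant for isoclinism. Recall that if $G$ and $S$ are isoclinic then, by P. Hall's definition, there are isomorphisms $G/Z(G)\cong S/Z(S)$ and $G'\cong S'$ intertwining the two commutator maps; in particular $G/Z(G)\cong S/Z(S)$ is a \emph{necessary} condition. Hence it suffices to find $G$ and $S$ with $|Cent(G)|=|Cent(S)|$ and $|G'|=|S'|$ but $G/Z(G)\not\cong S/Z(S)$.

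The natural place to look is the smallest value of $|Cent|$ that already spreads across more than one isoclinism class. By the theorem of Belcastro and Sherman \cite{BS} there are no $\mathcal C_n$-groups for $n=2,3$, and the $\mathcal C_4$-groups are exactly those with $G/Z(G)\cong C_2\times C_2$, a single isoclinism class. The first interesting case is therefore $n=5$: a finite group $G$ is a $\mathcal C_5$-group if and only if $G/Z(G)\cong C_3\times C_3$ or $G/Z(G)\cong S_3$. These are two genuinely different isoclinism types (the central quotients are non-isomorphic, indeed of different orders), so no two representatives drawn from the two families can be isoclinic. The decisive point I would verify next is that the $|G'|$ invariant cannot separate them: in the first family $G$ is nilpotent of class $2$ with $G'\le Z(G)$, and since $G/Z(G)$ has exponent $3$ every commutator satisfies $[x,y]^3=[x^3,y]=1$, which together with $G/Z(G)$ having rank $2$ forces $G'\cong C_3$; in the second family one checks on a representative that $|G'|=3$ as well.

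Concretely I would take $S=S_3$ and $G$ the extraspecial group of order $27$ (the Heisenberg group over $\mathbb F_3$). For $S_3$ the centralizers are the whole group, the three order-$2$ subgroups generated by the transpositions, and the cyclic group $A_3$, so $|Cent(S_3)|=5$, while $S_3'=A_3$ gives $|S_3'|=3$ and $S_3/Z(S_3)\cong S_3$. For $G$ one has $Z(G)=G'\cong C_3$ and $G/Z(G)\cong C_3\times C_3$; each non-central element $g$ has $C_G(g)=\langle g,Z(G)\rangle$ of order $9$, and these run over the four subgroups of order $9$ containing $Z(G)$ (matching the four subgroups of order $3$ of $G/Z(G)$), so together with $G$ itself $|Cent(G)|=5$ and $|G'|=3$. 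Thus $|Cent(G)|=|Cent(S)|$ and $|G'|=|S'|$, but $G/Z(G)\cong C_3\times C_3\not\cong S_3\cong S/Z(S)$, so $G$ is not isoclinic to $S$, contradicting the conjecture.

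The only real content, and the step I expect to require the most care, is the claim that \emph{both} $\mathcal C_5$-families are pinned to $|G'|=3$, since this is exactly what makes the added hypothesis $|G'|=|S'|$ powerless to rescue the conjecture; the class-$2$ argument settles the $C_3\times C_3$ family uniformly, whereas for the $S_3$ family it is cleanest to produce a specific witness ($S_3$ itself, or the dicyclic group of order $12$) rather than argue in general. Everything else is a short finite verification of centralizer counts, which could also be confirmed by computer algebra; and one may further note that abelian direct factors leave $|Cent|$, $|G'|$ and the isoclinism class all unchanged, so replacing $G$ and $S$ by $G\times A$ and $S\times B$ with $A,B$ abelian yields entire infinite families of counterexamples.
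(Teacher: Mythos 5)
Your counterexample is correct, but it is a genuinely different one from the paper's. You work at the minimal level $|Cent|=5$, pairing $S_3$ with the extraspecial group $E$ of order $27$: both have $|Cent|=5$ and $|G'|=3$, yet $S_3/Z(S_3)\cong S_3\not\cong C_3\times C_3\cong E/Z(E)$, so they cannot be isoclinic; all the verifications you give (the five centralizers of $S_3$, the four order-$9$ centralizers of $E$ plus $E$ itself, and the class-$2$ argument forcing $|E'|=3$) check out. The paper instead introduces the class of $Cpo$-groups (all non-central elements have centralizer of prime order), proves they have order $p$ or $pq$ with $q\mid p-1$, shows $|Cent(G)|=p+2$ for the largest prime $p$, and then takes the non-abelian groups of orders $55$ and $22$: both have $13$ centralizers and derived subgroup of order $11$, but central quotients of orders $55$ and $22$ respectively. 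Your route is more economical and produces the smallest possible centralizer count for which two isoclinism families coexist, leaning on the Belcastro--Sherman picture of $\mathcal C_5$-groups (though your direct computations make it self-contained); the paper's route is heavier but yields a structural classification of $Cpo$-groups of independent interest and an infinite family of counterexamples parametrized by primes $p$ with $p-1=qr$. Your closing observation that abelian direct factors preserve $|Cent|$, $|G'|$ and the isoclinism type likewise upgrades your single example to infinitely many.
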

  
In this paper, we give a negative answer to this Conjecture. To disprove the above Conjecture, we need to define the new class of groups. 

 \begin{defn}
A group $G$ is called a $Cpo$-group if  the order of the centralizer of all non-central elements of $G$ is prime.
\end{defn}

In Section $2$, we characterize $Cpo$-groups and we proved that they are of order $p$ or $pq$ for some prime numbers $p, q$, see Theorem \ref{cl5}.\\ 

A group $G$ is called an $CA$-group, if the centralizer of every non-central element of $G$ is abelian. Clearly the $Cpo$-groups are $CA$-groups.

In Section $3$, we obtain $|Cent(G)|$ for $Cpo$-group $G$, see Theorem \ref{cl}, and we give some counterexample for Conjecture \ref{c} among the class of $Cpo$-groups.\\

\section{\textbf{ Characterization of $Cpo$-groups }}

Let $G$ be a finite group. In this paper, $G$ is called a $Cpo$-group if the order of the centralizer of all non-trivial elements of $G$ is prime.
 At first we give some example of $Cpo$-groups and then characterize $Cpo$-groups.

\begin{ex}
 The groups of prime order are $Cpo$-groups. Also the symmetric group of degree $3$, $S_3$ and the dihedral groups of degree $10$, $D_{10}$ are $Cpo$-groups. 
\end{ex}

It is clear that, for a $Cpo$-group $G$, $C_G(x)=<x>$ for all non-trivial elements $x\in G$, so every non-trivial element of $G$ is of prime order. 
Conversely, it is not necessarily true, For instance the alternating group of degree $5$, $A_5$.\\
Note that a non-abelian $Cpo$-group is centerless group, means $Z(G)=1$.

\begin{lemma}\label{cl1}
Let $G$ be a $Cpo$-group and $H$ be a non-centerless subgroup of $G$. Then $|H|$ is prime.
\end{lemma}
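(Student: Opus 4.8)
The plan is to exploit the structural fact, already recorded in the excerpt, that in a $Cpo$-group every centralizer of a non-trivial element is cyclic of prime order, namely $C_G(x)=\langle x\rangle$ for all $x\neq 1$. The hypothesis that $H$ is non-centerless means precisely that $Z(H)\neq 1$, so I may fix a non-trivial element $z\in Z(H)$. The whole argument will then hinge on this single choice of element.

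First I would observe that since $z$ is central in $H$, every element of $H$ commutes with $z$; that is, $H\subseteq C_G(z)$. Because $G$ is a $Cpo$-group and $z\neq 1$, the centralizer $C_G(z)$ has prime order, say $|C_G(z)|=p$, and moreover $C_G(z)=\langle z\rangle$. Thus $H$ is a subgroup of the group $\langle z\rangle$ of prime order $p$.

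The final step is to combine these facts: $H$ already contains the non-trivial element $z$, so $H$ is not the trivial subgroup, and a group of prime order has no proper non-trivial subgroups. Hence $H=\langle z\rangle$ and $|H|=p$ is prime, as desired.

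I expect no serious obstacle: the entire argument rests on making the correct choice of element, a non-trivial $z\in Z(H)$, after which the prime-order property of $C_G(z)$ does all the work. The only point requiring a moment's care is to ensure the relevant centralizer is taken inside $G$, where the $Cpo$ hypothesis applies, rather than inside $H$; this is automatic, since $z\in Z(H)$ gives $H\subseteq C_G(z)$ directly, and in any case $C_H(z)=C_G(z)\cap H\subseteq C_G(z)$.
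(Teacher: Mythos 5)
Your proof is correct and is exactly the argument the paper has in mind (its own proof is just the word ``Straightforward''): a non-trivial $z\in Z(H)$ forces $H\le C_G(z)=\langle z\rangle$, which has prime order by the $Cpo$ hypothesis, so $H=\langle z\rangle$. Your closing remark about taking the centralizer in $G$ rather than in $H$ is the right point of care, and you handle it correctly.
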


\begin{proof}
Straight forward. 
\end{proof}

\begin{cor}\label{cl2}If $G$ is a $Cpo$-group, then:
\begin{itemize}

\item [(1)]For every $p\in \pi(G)$ the order of $p$-sylow subgroup of $G$ is $p$. In particular $|G|=\sqcap p_i$ where $p_i\in \pi(G)$.
\item [(2)]For every non-central element $x\in G$, $C_G(x)$  is a sylow subgroup of $G$.
\item [(3)]$C_G(x)=C_G(y)$ or $C_G(x)\cap C_G(y)=1$ for every non-central element $x, y \in G$.
\end{itemize}
\end{cor}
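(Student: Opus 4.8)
The plan is to deduce all three parts from Lemma \ref{cl1} together with the two preliminary observations already recorded before the corollary, namely that for a non-central element $x$ one has $C_G(x)=\langle x\rangle$ with $x$ of prime order, and that a non-abelian $Cpo$-group is centerless. The only genuine input from outside the excerpt is the classical fact that a nontrivial finite $p$-group has nontrivial center; this is exactly the hook that lets Lemma \ref{cl1} apply in part (1).

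For (1), I would fix $p\in\pi(G)$ and a Sylow $p$-subgroup $P$ of $G$. Since $P$ is a nontrivial $p$-group, $Z(P)\neq 1$, so $P$ is a non-centerless subgroup of $G$. Lemma \ref{cl1} then forces $|P|$ to be prime; as $|P|$ is a power of $p$, this yields $|P|=p$. Letting $p$ range over $\pi(G)$ shows that every prime divides $|G|$ to the first power only, whence $|G|=\prod_{p_i\in\pi(G)} p_i$ is squarefree. For (2), take a non-central $x$; by the preliminary remark $C_G(x)=\langle x\rangle$ and $x$ has prime order $p$, so $|C_G(x)|=p$. By part (1) the full $p$-part of $|G|$ is $p$, so a subgroup of order $p$ is automatically a Sylow $p$-subgroup; hence $C_G(x)$ is a Sylow subgroup of $G$. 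For (3), set $D=C_G(x)\cap C_G(y)$ for non-central $x,y$. Since $D\leq C_G(x)$ and $|C_G(x)|$ is prime, $D$ is either trivial or all of $C_G(x)$; by the same reasoning with $C_G(y)$, either $D=1$ or $D=C_G(x)=C_G(y)$, which is the claimed dichotomy.

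I do not expect a serious obstacle here: each part is a short deduction. The closest thing to a subtle point is making sure that in (1) the invocation of Lemma \ref{cl1} is legitimate, which it is precisely because a nontrivial $p$-group never has trivial center, so $P$ is never a counterexample to the hypothesis of that lemma. The remaining steps in (2) and (3) are elementary facts about subgroups of groups of prime order, and the squarefreeness in (1) is immediate once every Sylow subgroup is shown to have prime order.
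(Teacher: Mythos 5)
Your argument is correct and is exactly the deduction the paper intends: the paper states this corollary without proof, leaving it to follow from Lemma \ref{cl1} (applied to a Sylow subgroup, which is non-centerless since a nontrivial $p$-group has nontrivial center) together with the preliminary observations that $C_G(x)=\langle x\rangle$ has prime order for nontrivial $x$. Your filling-in of parts (2) and (3) from part (1) and the primality of $|C_G(x)|$ is the natural and correct completion.
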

\begin{lemma}\label{cl3}
Let $G$ be a $Cpo$-group. Then:
\begin{itemize}
\item [(1)]Any proper subgroup of $G$ is nilpotent. In particular $G$ is solvable.
\item [(2)]$|G|$ is prime or $G$ is non-nilpotent with $|G|=p^a q^b$ where $p$ and $q$ are unequal prime numbers and $m$, $n\in\mathbb N$.  
\end{itemize}
\end{lemma}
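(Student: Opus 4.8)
The plan is to reduce both parts to the single numerical fact that a $Cpo$-group has at most two prime divisors, and to extract this from the metacyclic structure of groups with cyclic Sylow subgroups. First I would record that every proper subgroup $H$ of $G$ is again a $Cpo$-group: for $1\neq x\in H$ we have $C_H(x)=C_G(x)\cap H=\langle x\rangle\cap H=\langle x\rangle$, of prime order. Hence Corollary \ref{cl2}(1), applied to $G$, shows that every Sylow subgroup of $G$ has prime order, so $|G|$ is squarefree and all Sylow subgroups of $G$ are cyclic. In particular $G$ is a $Z$-group, and the classical H\"older--Burnside--Zassenhaus theorem on groups with cyclic Sylow subgroups already delivers the solvability asserted in (1).

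The heart of the matter is to pin down $|G|$. By that same structure theorem a $Z$-group is metacyclic, so I may write $G=\langle a\rangle\rtimes\langle b\rangle$ with $\langle a\rangle\trianglelefteq G$ cyclic of order $m$, with $\langle b\rangle$ cyclic of order $n$, and with $mn=|G|$ and $\gcd(m,n)=1$. Now I feed the $Cpo$ hypothesis into this decomposition. Since $\langle a\rangle\leq C_G(a)$ and, for $a\neq 1$, the group $C_G(a)$ has prime order, the integer $m$ is forced to be prime (or $m=1$); applying the identical remark to $b$ forces $n$ to be prime (or $n=1$). Therefore $|G|=mn$ equals $1$, a prime, or a product $pq$ of two distinct primes. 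This collapse from arbitrarily many prime divisors down to at most two is exactly where the content lies.

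With $|G|\in\{p,pq\}$ established, both assertions follow quickly. If $|G|=p$ then the only proper subgroup is trivial and $G$ is abelian, which is the prime case of (2). If $|G|=pq$ then every proper subgroup has order $1$, $p$ or $q$, hence is cyclic and so nilpotent, proving (1); moreover $G$ must be non-abelian, for an abelian $Cpo$-group is non-centerless and so has prime order by Lemma \ref{cl1}, while a non-abelian group of order $pq$ is non-nilpotent. Thus in the non-prime case $G$ is non-nilpotent with $|G|=p^aq^b$ (in fact $a=b=1$), which is (2); solvability is evident throughout.

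The main obstacle is genuinely the reduction to two prime divisors, after which (1) and (2) are bookkeeping; the elegance comes from the metacyclic theorem, which converts the single global $Cpo$ condition into the two local statements that $m$ and $n$ are prime. I would note an alternative that matches the $p^aq^b$ phrasing of (2): prove (1) first, observe that a group whose proper subgroups are all nilpotent is either nilpotent or a minimal non-nilpotent (Schmidt) group, and invoke Schmidt's theorem to get order $p^aq^b$. That route, however, requires proving (1) on its own, which in turn forces one to exclude non-abelian proper subgroups, i.e.\ Frobenius subgroups of order $rs$, and that exclusion reduces to the very same Sylow-counting analysis; the $Z$-group theorem performs it in a single stroke and dovetails with Theorem \ref{cl5}.
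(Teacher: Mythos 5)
Your proof is correct, but it follows a genuinely different route from the paper's. The paper proves (1) by noting that every non-trivial element of a $Cpo$-group has prime order (since $\langle x\rangle\le C_G(x)$) and that a proper subgroup $H$ satisfies $\pi(G)\setminus\pi(H)\neq\varnothing$, then invoking Deaconescu's classification of groups all of whose elements have prime order; it then proves (2) by observing that $Z(G)=1$ forces $G$ non-nilpotent and applying Schmidt's theorem (Robinson 9.1.9) to the conclusion of (1) to get $|G|=p^aq^b$. You instead use Corollary \ref{cl2}(1) to see that every Sylow subgroup is cyclic of prime order, so $G$ is a $Z$-group; the H\"older--Burnside--Zassenhaus theorem then gives $G=\langle a\rangle\rtimes\langle b\rangle$ with $|a|=m$, $|b|=n$, $\gcd(m,n)=1$ and $|G|=mn$, and feeding $\langle a\rangle\le C_G(a)$ and $\langle b\rangle\le C_G(b)$ into the $Cpo$ hypothesis forces each of $m$ and $n$ to be $1$ or prime, so $|G|\in\{1,p,pq\}$; both assertions then reduce to Lagrange-level bookkeeping. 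Your approach buys a sharper conclusion at this stage --- it already yields $|G|=pq$ exactly, which is most of Theorem \ref{cl5}, and it replaces two external classifications by one structure theorem; the paper's approach is more modular, reusing Schmidt's theorem later and matching the $p^aq^b$ phrasing of (2) literally. Two small presentational points: your opening observation that proper subgroups are again $Cpo$-groups is never actually used (your final step needs only Lagrange), and the ``Hence'' linking it to Corollary \ref{cl2}(1) is a non sequitur, since that corollary is applied directly to $G$.
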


\begin{proof}
(1)~~Assume that $H$ is a proper subgroup of $G$, so by Corollary \ref{cl2}, $\pi(G)\backslash \pi(H)\neq \varnothing$ and since for each $x\in G$, 
the subgroup $<x>\le C_G(x)$, therefore all  non-trivial elements of a $Cpo$-group have prime order, thus by 2.10 of \cite{D}, the result follows.\\

(2)~~ For a non-trivial finite group $G$, we have $|\pi(G)|=1$ or $|\pi(G)|>1$. If $|\pi(G)|=1$, then $Cpo$-group $G$ must have prime order.
 In the case that $|\pi(G)|>1$, since for each  non-trivial element $x\in G$ the order of $C_G(x)$ is prime and $|Z(G)|$ divides $|C_G(x)|$,
  hence $Z(G)=1$ and $G$ is non-nilpotent, therefore by Part (1) and Schmidt's Theorem (9.1.9, \cite{R}), the result follows.\\

\end{proof}

\begin{lemma}
If $G$ is a $Cpo$-group and $1\neq H\leq G$ and $1\neq N\unlhd G$, then $H$  and $\frac{G}{N}$ are $Cpo$-groups.
\end{lemma}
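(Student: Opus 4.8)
The plan is to treat the two assertions separately, since the subgroup case is a clean consequence of the standard centralizer identity while the quotient case requires exploiting the rigid order structure of $Cpo$-groups.

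For the subgroup claim, I would start from the identity $C_H(x) = C_G(x) \cap H$, valid for every $x \in H$. Fix a non-trivial $x \in H$. Because $G$ is a $Cpo$-group, $|C_G(x)|$ is a prime, so $C_G(x)$ has no proper non-trivial subgroups. Now $C_H(x) = C_G(x) \cap H$ is a subgroup of $C_G(x)$, and it is non-trivial since it contains $x \neq 1$. Hence $C_H(x) = C_G(x)$ has prime order. As $x$ was an arbitrary non-trivial element of $H$, this shows $H$ is a $Cpo$-group. This argument is uniform and needs nothing beyond the definition together with the centralizer identity.

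For the quotient claim, I would first constrain the shape of $G$. By Corollary \ref{cl2}(1) the order $|G|$ is squarefree, and by Lemma \ref{cl3}(2) it involves at most two primes; hence $|G| = p$ or $|G| = pq$ for distinct primes $p, q$. If $|G| = p$, the only non-trivial normal subgroup is $N = G$, so $G/N$ is trivial and vacuously a $Cpo$-group. If $|G| = pq$, then every divisor of $|G|$ is $1$, $p$, $q$, or $pq$, so a non-trivial normal subgroup $N$ has order $p$, $q$, or $pq$. When $|N| = pq$ we again get $G/N$ trivial; when $|N|$ is $p$ or $q$, the quotient $G/N$ has prime order $q$ or $p$ respectively, and a group of prime order is a $Cpo$-group. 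In every case $G/N$ is a $Cpo$-group.

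The main obstacle, and the reason the two parts call for genuinely different methods, is that centralizers do not behave well under quotients: there is no analogue of $C_H(x) = C_G(x)\cap H$ relating $C_{G/N}(xN)$ to $C_G(x)$, and in general the centralizer of $xN$ can be strictly larger than the image of $C_G(x)$. I therefore do not attempt a direct centralizer computation in $G/N$; instead the decisive step is the order reduction to $|G| \in \{p, pq\}$, after which the possible quotients are so constrained (trivial or of prime order) that the $Cpo$ property is immediate. The only point requiring care is confirming that no non-trivial normal subgroup can yield a quotient of composite order, which the divisor analysis of $pq$ settles.
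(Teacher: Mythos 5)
The paper states this lemma with no proof at all, so there is no argument of the authors' to compare yours against; judged on its own, your proof is correct. The subgroup half is exactly the natural argument: $C_H(x)=C_G(x)\cap H$ is a non-trivial subgroup of the prime-order group $C_G(x)$, hence equals it, so $|C_H(x)|$ is prime. For the quotient half you are right that there is no identity relating $C_{G/N}(xN)$ to $C_G(x)$, and your detour through the order classification is a legitimate way around this: Corollary \ref{cl2}(1) gives that $|G|$ is squarefree and Lemma \ref{cl3}(2) that $|\pi(G)|\le 2$, both of which precede this lemma in the paper, so there is no circularity, and the conclusion $|G|\in\{p,pq\}$ forces every proper non-trivial quotient to have prime order and hence be a $Cpo$-group. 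The only point worth flagging is a boundary convention: when $N=G$ (or $|G|=p$ and $N\ne 1$) the quotient is trivial, and calling the trivial group a $Cpo$-group is only vacuously justified; the authors presumably intend $N$ to be proper, but your explicit handling of that case does no harm. One could shorten your quotient argument slightly by invoking Theorem \ref{cl5} wholesale, but since that theorem appears after the lemma, your choice to rely only on the earlier Corollary \ref{cl2} and Lemma \ref{cl3} is the more careful one.
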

\begin{cor}\label{cl4}
If $G$ is a $Cpo$-group, then there exists  $p\in \pi(G)$  such that the fitting subgroup $F(G)=O_p(G)$ in which $O_p(G)$ is the largest normal $p$-subgroup of $G$, and $O_{p'}(G)=1$.
\end{cor}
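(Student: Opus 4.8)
The plan is to rely on the standard decomposition of the Fitting subgroup, $F(G)=\prod_{r\in\pi(G)}O_r(G)$, the internal direct product of its Sylow subgroups (valid because $F(G)$ is nilpotent). With this in hand, proving $F(G)=O_p(G)$ for a single prime $p$ reduces to showing that $O_r(G)\neq 1$ for at most one $r\in\pi(G)$. First I would dispose of the abelian case: if $G$ is abelian, then being a $Cpo$-group forces $C_G(x)=G$ to have prime order for every non-trivial $x$, so $|G|$ is prime and $G=O_p(G)=F(G)$ with $O_{p'}(G)=1$ trivially.

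For the non-abelian case I would exploit that $Z(G)=1$ (as already recorded for non-abelian $Cpo$-groups), so that every non-trivial element is non-central and hence has a centralizer of prime order. The key step is a coprime-commuting argument. Suppose, for contradiction, that two distinct primes $p,q\in\pi(G)$ satisfy $O_p(G)\neq 1\neq O_q(G)$. Since $O_p(G)$ and $O_q(G)$ are normal subgroups of coprime order, $[O_p(G),O_q(G)]\le O_p(G)\cap O_q(G)=1$, so they centralize each other elementwise. Choosing non-trivial $x\in O_p(G)$ and $y\in O_q(G)$, we obtain $y\in C_G(x)$, so $C_G(x)$ contains the subgroups $\langle x\rangle$ and $\langle y\rangle$ of coprime orders divisible by $p$ and $q$ respectively; hence $|C_G(x)|$ is divisible by $pq$ and is not prime. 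This contradicts the $Cpo$ hypothesis applied to the non-central element $x$. Therefore at most one $O_r(G)$ is non-trivial, and since $G$ is solvable (Lemma \ref{cl3}) and non-trivial we have $F(G)\neq 1$; consequently $F(G)=O_p(G)$ for exactly one prime $p$.

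It remains to deduce $O_{p'}(G)=1$, which I expect to be the main (though still mild) obstacle, since $F(G)=O_p(G)$ does not by itself force the largest normal $p'$-subgroup to vanish. Here I would argue that $O_{p'}(G)$ is a normal, hence solvable, subgroup of $G$; if it were non-trivial, its Fitting subgroup would be non-trivial and would supply a characteristic, hence normal in $G$, non-trivial $q$-subgroup $O_q(O_{p'}(G))$ for some prime $q\neq p$. This would give $O_q(G)\neq 1$, contradicting $F(G)=O_p(G)$, and so $O_{p'}(G)=1$. Alternatively, once one knows via Corollary \ref{cl2} and Lemma \ref{cl3} that a non-abelian $Cpo$-group has squarefree order $pq$, the conclusion is immediate: the normal Sylow subgroup is $O_p(G)=F(G)$, while the other Sylow subgroup is not normal, whence $O_{p'}(G)=1$.
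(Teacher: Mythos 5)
Your proof is correct, but it follows a genuinely different route from the paper's. The paper argues in one stroke: $F(G)\neq 1$ (solvability from Lemma \ref{cl3}), $F(G)$ is nilpotent and hence has non-trivial center, so Lemma \ref{cl1} forces $|F(G)|$ to be \emph{prime}; by Corollary \ref{cl2}(1) a subgroup of prime order $p$ is already a full Sylow $p$-subgroup, whence $F(G)=O_p(G)$ and $O_{p'}(G)=1$. You instead decompose $F(G)=\prod_r O_r(G)$ and rule out two non-trivial factors by the coprime-commuting argument ($[O_p(G),O_q(G)]=1$ would put an element of order $q$ into $C_G(x)$ for $1\neq x\in O_p(G)$, contradicting primality of $|C_G(x)|$ since $Z(G)=1$). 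Both are valid; the trade-offs are these. The paper's route is shorter and yields the stronger intermediate fact $|F(G)|=p$, which is what actually gets used in the proof of Theorem \ref{cl5}; your route does not recover that (you only get $F(G)=O_p(G)$, which is all the statement asks for). On the other hand, your handling of $O_{p'}(G)=1$ — passing to $F(O_{p'}(G))$, which is characteristic in a normal subgroup and hence a non-trivial normal $q$-subgroup of $G$ for some $q\neq p$ — is more careful than the paper's bare ``it follows that $O_{p'}(G)=1$'', since $O_{p'}(G)$ need not be nilpotent and so is not automatically inside $F(G)$; your alternative shortcut via the squarefree order $pq$ is also fine but leans on Theorem \ref{cl5}, which in the paper comes \emph{after} this corollary and cites it, so you should avoid that version to prevent circularity. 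One last nit: your abelian case uses the Section 2 reading of the $Cpo$ condition (centralizers of all non-trivial elements are of prime order); under the Definition in the introduction (non-central elements only) the abelian case would be vacuous rather than forcing $|G|=p$, but this ambiguity is the paper's, not yours.
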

\begin{proof}
As $F(G)\neq1$ and by Lemma \ref{cl1}, $|F(G)|=p$ for some $p\in \pi(G)$ and therefore $F(G)$ is a $p$-sylow subgroup of $G$ by Part (1) of Corollary \ref{cl2}. 
It follows that $F(G)=O_p(G)$ and $O_{p'}(G)=1$.  
\end{proof}
In the following we have the main theorem.

\begin{thm}\label{cl5}
Let $G$ be a $Cpo$-group. Then $G$ is one of the following groups.
\begin{itemize}
\item [(1)]$|G|=p$ where $p$ is prime number.
\item [(2)]$|G|=pq$, where $p$ , $q$ are  distinct primes  and $F(G)=G'$, $|F(G)|=p$  such that $p> q$ and $q|p-1$.
\end{itemize}
\end{thm}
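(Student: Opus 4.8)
The plan is to leverage the structural results already in place and reduce everything to elementary Sylow theory. By Lemma \ref{cl3}(2), a $Cpo$-group $G$ is either of prime order—which is precisely conclusion (1)—or non-nilpotent; so I would immediately dispose of the first case and assume throughout that $G$ is non-nilpotent, aiming at conclusion (2). Combining Lemma \ref{cl3}(2) with Corollary \ref{cl2}(1), which says every Sylow subgroup of $G$ has prime order, the factorization $|G|=p^aq^b$ collapses to $|G|=pq$ for two distinct primes $p,q$.

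Next I would pin down the Fitting subgroup and the derived subgroup simultaneously. By Corollary \ref{cl4} we have $F(G)=O_p(G)$ for some $p\in\pi(G)$ with $O_{p'}(G)=1$; since $F(G)$ is a nontrivial $p$-subgroup, Corollary \ref{cl2}(1) (or Lemma \ref{cl1}, as $F(G)$ is abelian and hence non-centerless) forces $|F(G)|=p$. Thus $F(G)$ is the normal Sylow $p$-subgroup of $G$, and $G/F(G)$ has order $q$, so it is cyclic and in particular abelian. This yields $G'\le F(G)$. Because $G$ is non-nilpotent it is non-abelian, so $G'\neq 1$; since $|F(G)|=p$ is prime, the only possibility is $G'=F(G)$, giving both $F(G)=G'$ and $|F(G)|=p$.

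It then remains to establish $p>q$ and $q\mid p-1$. Writing $P=F(G)$ for the normal Sylow $p$-subgroup and choosing a Sylow $q$-subgroup $Q$, the count $|P||Q|=pq=|G|$ together with $P\cap Q=1$ exhibits $G=P\rtimes Q$. Conjugation defines a homomorphism $\varphi\colon Q\to\mathrm{Aut}(P)$, and this $\varphi$ must be nontrivial: otherwise $Q$ would centralize $P$, making $G=P\times Q$ nilpotent, a contradiction. Since $P$ is cyclic of prime order $p$, we have $\mathrm{Aut}(P)\cong\mathbb{Z}/(p-1)\mathbb{Z}$, cyclic of order $p-1$; and because $|Q|=q$ is prime, a nontrivial $\varphi$ is injective, so its image has order $q$. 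By Lagrange $q\mid p-1$, and this at once forces $q\le p-1<p$, i.e. $p>q$, completing conclusion (2).

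No single step is a serious obstacle, since the theorem is essentially a clean synthesis of Corollaries \ref{cl2} and \ref{cl4}, Lemma \ref{cl3}, and the elementary fact that $\mathrm{Aut}(\mathbb{Z}/p)$ is cyclic of order $p-1$. The only points requiring care are bookkeeping ones: one must verify that the prime $p$ attached to the Fitting subgroup is the \emph{same} prime that names the normal Sylow subgroup (guaranteed by $F(G)=O_p(G)$ being Sylow of order $p$), and one must use non-nilpotency \emph{essentially} to guarantee that the conjugation map $\varphi$ is genuinely nontrivial rather than merely permitted to be so—this is exactly what converts the abstract semidirect product into the arithmetic constraint $q\mid p-1$.
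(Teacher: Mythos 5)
Your proof is correct and follows essentially the same route as the paper: reduce to $|G|=pq$ via Lemma \ref{cl3} and Corollary \ref{cl2}, identify $F(G)=O_p(G)$ of prime order via Corollary \ref{cl4}, and deduce $F(G)=G'$ from the fact that $G/F(G)$ has order $q$. The only divergence is in establishing $p>q$: you obtain it as an immediate consequence of $q\mid p-1$ (via the faithful conjugation action of $Q$ on $P$ and $|\mathrm{Aut}(P)|=p-1$), whereas the paper first proves $p>q$ by a separate contradiction (if $p<q$ then $Q$ has smallest-prime index, hence is normal, hence would lie in $F(G)=O_p(G)$, which is impossible) and only afterwards cites non-abelianness for $q\mid p-1$; your ordering is arguably tidier since it makes the divisibility step fully explicit.
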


\begin{proof}
Suppose that $|G|$ is not prime. Then by Part (1) of Corollary \ref{cl2} and Part (1) of Lemma \ref{cl3}, $|G|=pq$  in which $p\neq q$. Now since $F(G)$ is a sylow subgroup of $G$ by corollary \ref{cl4}, we set $F(G)=O_p(G)\in syl_p(G)$ and $Q \in syl_q(G)$ where $|Q|=q$. If $p< q$, then $[G:Q]=p$, so $Q$ is a normal subgroup of $G$. On the other hand  by Lemma \ref{cl3}, $G$ is solvable, it follows that $Q\cap F\neq1$, a contradiction. Therefore $p> q$ and  $F(G)=G'$. Besides $G$ is not abelian, so $q | p-1$
\end{proof}

\begin{cor}
Let $G$ be a $Cpo$-group. Then $G$ is a Frobenius group with Frobenius kernel $P$ and Frobenius complement $H$, such that $P$ is a normal 
Sylow $p$-subgroup of $G$ of order $p$ for some prime $p$ dividing the order of $G$, $H$ is an abelian $p'$-subgroup of $G$.
\end{cor}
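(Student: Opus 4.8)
The plan is to reduce at once to the non-abelian case and then verify the defining trivial-intersection property of a Frobenius group directly from the centralizer structure established in Section~2. Since a group of prime order is abelian and hence not Frobenius, I read the statement as concerning the non-abelian $Cpo$-groups; by Theorem~\ref{cl5} such a group has order $pq$ with $p>q$, $q\mid p-1$, and $F(G)=G'=O_p(G)=:P$ a normal Sylow $p$-subgroup of order $p$. I fix a Sylow $q$-subgroup $Q=:H$, so $|Q|=q$ and $G=PQ$.

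First I would pin down $N_G(Q)=Q$. Since $[G:Q]=p$ is prime and $Q\le N_G(Q)$, either $N_G(Q)=Q$ or $N_G(Q)=G$; the latter would make $Q$ normal and force $G=P\times Q$ abelian, contrary to assumption. Hence $N_G(Q)=Q$, and $Q$ has exactly $[G:N_G(Q)]=p$ conjugates in $G$.

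Next comes the core step: showing that $Q$ meets its conjugates trivially, that is $Q\cap Q^g=1$ for every $g\in G\setminus Q$. Suppose not, and pick $1\neq y\in Q\cap Q^g$. Since $y$ is a non-central element lying in the Sylow $q$-subgroup $Q$, Part~(2) of Corollary~\ref{cl2} gives $C_G(y)=Q$; applying the same reasoning to $Q^g$, which is a Sylow $q$-subgroup containing $y$, gives $C_G(y)=Q^g$. Therefore $Q=Q^g$, so $g\in N_G(Q)=Q$, a contradiction. (Part~(3) of Corollary~\ref{cl2} yields the same conclusion, since $C_G(y)=Q$ and $C_G(y)=Q^g$ cannot have trivial intersection.) This is the step I expect to carry the weight of the argument, as it is where the $Cpo$-hypothesis is genuinely used; everything else is bookkeeping on orders.

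Having verified $Q\cap Q^g=1$ for all $g\notin Q=N_G(Q)$, the group $G$ is by definition a Frobenius group with complement $H=Q$. Finally I would identify the kernel by counting: the $p$ conjugates of $Q$ intersect pairwise trivially and so account for exactly $p(q-1)$ non-trivial elements, leaving $pq-p(q-1)=p$ elements (including $1$) outside every conjugate of the complement. On the other hand, the $p-1$ non-trivial elements of $P$ all have order $p$, while each conjugate of $Q$ consists of $q$-elements, so $P$ is disjoint from all conjugates of $Q$ apart from the identity. Comparing counts, these $p$ remaining elements are precisely $P$, which is therefore the Frobenius kernel; it is the normal Sylow $p$-subgroup of order $p$. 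Since $H=Q$ is cyclic of prime order $q\neq p$, it is abelian and a $p'$-subgroup, completing the proof.
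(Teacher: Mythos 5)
Your proof is correct, but it takes a genuinely different route from the paper. The paper's proof is a two-line citation: it takes $P=F(G)$ from Part (2) of Theorem \ref{cl5} and then appeals to Schmidt's classification of finite $CA$-groups \cite{S} (a $Cpo$-group being a $CA$-group) to obtain the Frobenius structure. You instead verify the Frobenius property from first principles: $N_G(Q)=Q$ because $[G:Q]$ is prime and $Q\unlhd G$ would force $G=P\times Q$ abelian; the trivial-intersection condition $Q\cap Q^g=1$ for $g\notin Q$ follows because any shared non-trivial element $y$ satisfies $C_G(y)=\langle y\rangle=Q=Q^g$ by the $Cpo$-hypothesis (equivalently Parts (2)--(3) of Corollary \ref{cl2}); and you identify the kernel as $P$ by the count $pq-p(q-1)=p$ together with the order-$p$ versus order-$q$ dichotomy, which sidesteps even Frobenius's theorem since $P$ is already known to be normal. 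What your approach buys is a self-contained, elementary argument relying only on Theorem \ref{cl5} and Corollary \ref{cl2}; what the paper's approach buys is brevity at the cost of an external classification theorem. You also correctly flag a point the paper's statement glosses over: a $Cpo$-group of prime order is abelian and is not a Frobenius group, so the corollary should be read as applying to non-abelian $Cpo$-groups only (case (2) of Theorem \ref{cl5}), which is exactly the reduction you make at the outset.
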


\begin{proof}
By taking $F(G)=P$ in Part (2) of Theorem \ref{cl5} and classification of finite $CA$-groups by Schmidt \cite{S}, the result follows.
\end{proof}

\section{\textbf{Groups with the Same number of centralizers }}

 Let $G$ and $H$ are two finite groups with the same number of centralizers.\\
 If $|G|=|H|$, $|Z(G)|=|Z(H)|$ and the order of elements $Cent(G)$ and $Cent(H)$  are equal, then it is not necessarily true that $G\cong H$. For example $Q_8$ and $D_8$.\\

In this section we consider groups with the same number of centralizers and we give some counterexample for Conjecture \ref{c}.\\

The groups $G$ and $H$ are said to be isoclinic if there are two isomorphisms
$\varphi:\frac{G}{Z(G)}\rightarrow \frac{H}{Z(H)}$ and $\phi:G'\rightarrow H'$ such that if  
$$\varphi(g_1Z(G))= h_1Z(H) ~~and~~ \varphi(g_2Z(G))= h_2Z(H),$$
with $g_1, g_2 \in G$, $h_1,  h_2 \in H$, then
$$\phi([g_1, g_2])=[h_1, h_2].$$ 
  The pair $(\varphi, \phi)$ is called a isoclinism from $G$ to $H$ and we write $G\sim H$.\\

Now we prove that the number of centralizers of a non-abelian $Cpo$-group is focused on the largest prime divisor of group.

\begin{thm}\label{cl}
Let $G$ be a non-abelian $Cpo$-group and $p$ be the largest prime divisor of $|G|$. Then $|Cent(G)|=p+2$.
\end{thm}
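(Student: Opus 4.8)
The plan is to exploit the complete structural description of non-abelian $Cpo$-groups furnished by Theorem \ref{cl5} and then simply enumerate the distinct centralizers. By Theorem \ref{cl5}, since $G$ is non-abelian we have $|G|=pq$ with $p>q$ distinct primes, $q\mid p-1$, and $P:=F(G)=G'$ a normal Sylow $p$-subgroup of order $p$; in particular $p$ is indeed the largest prime divisor of $|G|$. I would also recall the observation made before Lemma \ref{cl1} that a non-abelian $Cpo$-group is centerless, so $Z(G)=1$.

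First I would record the two basic facts about centralizers. Since $Z(G)=1$, the only element whose centralizer is all of $G$ is the identity, contributing the single centralizer $C_G(1)=G$. For every non-trivial $x$, the defining property of a $Cpo$-group together with Part (2) of Corollary \ref{cl2} gives $C_G(x)=\langle x\rangle$, a Sylow subgroup of $G$; moreover by Part (3) of Corollary \ref{cl2} any two such centralizers either coincide or meet trivially.

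Next I would count the Sylow subgroups. The unique Sylow $p$-subgroup is $P$ itself, since $n_p\mid q$ and $n_p\equiv 1\pmod p$ force $n_p=1$ as $q<p$. For the Sylow $q$-subgroups, $n_q\mid p$ and $n_q\equiv 1\pmod q$ leave $n_q\in\{1,p\}$; were $n_q=1$, both Sylow subgroups would be normal and $G$ would be the abelian direct product of two cyclic groups of coprime prime order, contradicting non-abelianness, so $n_q=p$. Thus $G$ has exactly $p$ Sylow $q$-subgroups $Q_1,\dots,Q_p$.

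Finally I would assemble the count. Every non-trivial element lies in $P$ or in exactly one $Q_i$, the $Q_i$ pairwise intersecting trivially by Part (3) of Corollary \ref{cl2}, and since each Sylow subgroup is cyclic of prime order it equals $\langle x\rangle$ for each of its non-trivial elements. Hence the non-trivial elements contribute precisely the centralizers $P,Q_1,\dots,Q_p$, which are pairwise distinct (distinct orders separate $P$ from the $Q_i$, and the $Q_i$ are distinct as subgroups). Adding the centralizer $G$ coming from the identity yields
$$|Cent(G)|=1+1+p=p+2.$$
I do not anticipate a serious obstacle; the only points requiring care are the justification that $n_q=p$ (which is exactly where non-abelianness is used) and the verification that the $p$ distinct Sylow $q$-subgroups really do give $p$ distinct centralizers rather than collapsing, both of which follow cleanly from the corollaries above.
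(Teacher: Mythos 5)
Your proof is correct and follows essentially the same route as the paper: reduce via Theorem \ref{cl5} to $|G|=pq$ with $p>q$ and $F(G)=G'$ the normal Sylow $p$-subgroup, note that every nontrivial element's centralizer is the prime-order Sylow subgroup it generates (Corollary \ref{cl2}), and count one centralizer equal to $G$, one of order $p$, and $p$ of order $q$. The only cosmetic difference is that you derive the count of order-$q$ centralizers from Sylow's congruence conditions together with non-abelianness ($n_q=p$), whereas the paper partitions the $pq-p$ elements outside $F(G)$ into centralizer classes of size $q-1$ to get $\frac{pq-p}{q-1}=p$; both arguments are sound.
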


\begin{proof}
By Applying Part (2) of Theorem \ref{cl5}, assume that $|G|=pq$ for some distinct prime numbers $p, q$, where $p>q$. Then $G$ has only one element $x$ such that $|C_G(x)|=p$ by Part (2) of Corollary \ref{cl2} and Part (2) of Theorem \ref{cl5}. It follows that the centralizers of $pq-p$ other elements  of $G$ have order $q$. Therefore by Part (3) of Corollary \ref{cl2}, we imply that $G$ has $\frac{pq-p}{q-1}=p$ centralizers of order $q$ and since $|Z(G)|=1$, hence $|Cent(G)|=p+2$.  
\end{proof}

Now we are ready to disprove Conjecture \ref{c}.

\begin{remark}\label{cl9}
If $G$ and $H$ are two $Cpo$-groups in which $|G|=55$, $|H|=22$, then by Theorem \ref{cl}, $|Cent(G)|=|Cent(H)|=13$  and by Part (2) of Theorem \ref{cl5} we have $|G'|=|H'|=11$ and $G, H$ are non-isoclinic groups.  
\end{remark}

\begin{cor}
 Remark \ref{cl9} holds for groups $|G|=qp$ and $|H|=rp$, where $ p, q, r$ are primes and $p>q, r$. Moreover $p-1=qr$.
\end{cor}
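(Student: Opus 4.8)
The plan is to mirror the structure of Remark \ref{cl9}, replacing the numerical data $55,\,22,\,11$ by the symbolic data $qp,\,rp,\,p$ and invoking the two computational results of the paper together with the fact that non-abelian $Cpo$-groups are centerless. Throughout I would work under the standing assumption $q\neq r$ (the case realized by $q=5$, $r=2$, $p=11$ in Remark \ref{cl9}), since this is exactly what forces $|G|\neq|H|$.

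First I would settle existence. By Part (2) of Theorem \ref{cl5}, a non-abelian $Cpo$-group of order $qp$ with $p>q$ exists precisely when $q\mid p-1$, and likewise a non-abelian $Cpo$-group of order $rp$ with $p>r$ exists precisely when $r\mid p-1$; in either case the group is the Frobenius group $C_p\rtimes C_q$ (respectively $C_p\rtimes C_r$) supplied by the Corollary to Theorem \ref{cl5} and Schmidt's classification of $CA$-groups. The hypothesis $p-1=qr$ delivers both divisibilities simultaneously, so genuine $Cpo$-groups $G$ and $H$ of the prescribed orders do exist. This is the role played by the extra condition $p-1=qr$.

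Next come the two equalities. Since $p>q$ and $p>r$, the prime $p$ is the largest prime divisor of both $|G|=qp$ and $|H|=rp$; hence Theorem \ref{cl} yields $|Cent(G)|=p+2=|Cent(H)|$. For the derived subgroups, Part (2) of Theorem \ref{cl5} identifies $G'=F(G)$ and $H'=F(H)$ with the respective normal Sylow $p$-subgroups of order $p$, so $|G'|=p=|H'|$. Thus $G$ and $H$ share both invariants appearing in Conjecture \ref{c}.

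Finally I would establish that $G$ and $H$ are non-isoclinic, which is where $q\neq r$ enters. As noted after the definition of $Cpo$-groups, a non-abelian $Cpo$-group is centerless, so $Z(G)=Z(H)=1$ and therefore $G/Z(G)\cong G$ and $H/Z(H)\cong H$. Any isoclinism $(\varphi,\phi)$ from $G$ to $H$ would in particular provide an isomorphism $\varphi\colon G/Z(G)\to H/Z(H)$, that is, an isomorphism $G\cong H$; but $|G|=qp\neq rp=|H|$, a contradiction. The only conceptual point to watch is this reduction of isoclinism to isomorphism for centerless groups; once it is in place, the distinct orders close the argument, and everything else is a direct application of Theorems \ref{cl5} and \ref{cl}.
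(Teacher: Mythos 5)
Your proof is correct and follows essentially the same route as the paper, whose own proof is just the one-line citation ``by applying Theorem \ref{cl} and Remark \ref{cl9}''; you simply supply the details that citation encodes (existence via $p-1=qr$, the equalities $|Cent(G)|=|Cent(H)|=p+2$ and $|G'|=|H'|=p$, and non-isoclinism from centerlessness plus $|G|\neq|H|$). Your explicit standing assumption $q\neq r$ is a needed clarification that the paper's statement leaves implicit, since for $q=r$ the two groups would be isomorphic.
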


\begin{proof}
By applying Theorem \ref{cl} and Remark \ref{cl9}, the result follows.
\end{proof}

\begin{cor}

Let $G$ be a  non-abelian $Cpo$-group and $p$ is the smallest prime divisor of $\pi(G)$. Then $|Cent(G)|\ge p+3$.

\end{cor}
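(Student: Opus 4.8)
The plan is to reduce this statement directly to the two structural results already established, namely Theorem \ref{cl5} and Theorem \ref{cl}. First I would invoke Theorem \ref{cl5}: since $G$ is a non-abelian $Cpo$-group, its order equals $P\cdot q$ for two distinct primes with $P>q$, where $P$ is the prime attached to the normal Sylow subgroup $F(G)=G'$. Consequently $\pi(G)=\{q,P\}$, so the smallest prime divisor of $|G|$ is exactly $p=q$, while $P$ is the largest prime divisor.

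Next I would apply Theorem \ref{cl}, which computes the number of centralizers of a non-abelian $Cpo$-group in terms of its largest prime divisor, giving $|Cent(G)|=P+2$. The only remaining point is to compare $P+2$ with $p+3$. Since $P$ and $p=q$ are distinct primes with $P>q$, they are integers satisfying $P\ge q+1=p+1$, whence
$$|Cent(G)|=P+2\ge (p+1)+2=p+3,$$
which is the desired inequality.

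I do not expect any genuine obstacle here: the entire content is bookkeeping about which prime plays which role, combined with the elementary fact that two distinct positive integers differ by at least one. The one subtlety worth flagging is notational, since in Theorem \ref{cl5} the symbol $p$ denotes the \emph{larger} prime (the one dividing $|F(G)|$), whereas in the present statement $p$ is the \emph{smallest} prime divisor; one must take care not to conflate the two roles. Finally, I would note that the bound is sharp: when $q=2$ and $P=3$ (the only consecutive pair of primes, consistent with the condition $q\mid P-1$ from Theorem \ref{cl5}), the group $G=S_3$ attains $|Cent(G)|=5=p+3$, so the constant $3$ cannot be improved in general.
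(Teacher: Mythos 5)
Your proof is correct, and it reaches the bound by a slightly different route than the paper. You quote Theorem \ref{cl} to get $|Cent(G)|=P+2$ with $P$ the largest prime divisor, and then use only that $P$ and $p$ are distinct primes with $P>p$, hence $P\ge p+1$. The paper instead recounts the centralizers as Sylow subgroups: every centralizer of a non-central element is a Sylow subgroup (Corollary \ref{cl2}), so $|Cent(G)|=n_p+n_q+1$, and since the Sylow subgroup for the larger prime is normal ($n_q=1$) while the one for the smaller prime $p$ is not, Sylow's theorem gives $n_p=ap+1$ with $a\ge 1$, whence $|Cent(G)|=ap+3\ge p+3$. The two arguments are close in spirit --- indeed the condition $q\mid P-1$ from Theorem \ref{cl5} is exactly the Sylow congruence $P\equiv 1\pmod q$ in disguise --- but yours is more economical because it reuses the formula $|Cent(G)|=P+2$ already established, whereas the paper's count delivers the marginally stronger conclusion that $|Cent(G)|\equiv 3\pmod p$. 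Your observation that $S_3$ attains equality (so the constant $3$ is sharp) is a worthwhile addition not present in the paper. One small caution: your inequality $P\ge p+1$ uses only that distinct integers differ by at least one, which suffices here, but if you wanted the congruence refinement you would need to invoke $p\mid P-1$ rather than mere distinctness.
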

\begin{proof}
By Part (2) of Theorem \ref{cl5}, $|G|=pq$, where $p$ , $q$ are  distinct primes. Thus $|Cent(G)|=n_p+ n_q+1$  such that $n_p$, $n_q$  are the number of $p$-sylow subgroups and $q$-sylow subgroups of $G$ respectively. Besides since $n_p=ap+1$ for a natural number $a$ and $n_q=1$, Sylow's Theorem and Theorem \ref{cl4} imply that $|Cent(G)|=ap+3$, hence the result follows.
\end{proof}
\begin{lemma}
Let $P$ be a $p$-sylow subgroup and $M$ be a nilpotent subgroup of $G$. Then $|Cent( \frac{M}{M\cap P})|=|Cent(\frac{M}{M'\cap P})|$.
\end{lemma}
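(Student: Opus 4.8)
The plan is to show that the two groups $M/(M\cap P)$ and $M/(M'\cap P)$ are isoclinic, and then to invoke the fact, recalled in the introduction from \cite{Z2}, that isoclinic groups have the same number of centralizers. Since $M'\cap P\leq M\cap P$, there is a natural surjection $M/(M'\cap P)\to M/(M\cap P)$ whose kernel is $K=(M\cap P)/(M'\cap P)$. Writing $B=M/(M'\cap P)$, it therefore suffices to prove $B\sim B/K$, and for this I would use the standard criterion that if $K$ is central in $B$ and $K\cap B'=1$, then $B\sim B/K$.

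First I would verify the two conditions $K\leq Z(B)$ and $K\cap B'=1$. The second is immediate: the derived subgroup of $B$ is $B'=M'/(M'\cap P)$ (because $M'\cap P\leq M'$), and since $(M\cap P)\cap M'=M'\cap P$ we get $K\cap B'=\big((M\cap P)\cap M'\big)/(M'\cap P)=1$. For the first condition I would use that $M$ is nilpotent, so $M=M_p\times M_{p'}$ splits as the direct product of its Sylow $p$-subgroup $M_p$ and its Hall $p'$-subgroup $M_{p'}$. Because $M\cap P$ consists of $p$-elements it lies in $M_p$, and since $M_p$ centralizes $M_{p'}$ in the direct product, every commutator $[x,y]$ with $x\in M\cap P$ and $y\in M$ reduces to a commutator inside $M_p$; hence $[M\cap P,M]\subseteq M_p\cap M'$. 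Combining this with $M_p\leq P$ gives $[M\cap P,M]\subseteq M'\cap P$, which says exactly that $K=(M\cap P)/(M'\cap P)$ is central in $B$.

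Once both conditions hold, one checks $Z(B/K)=Z(B)/K$ (using $K\cap B'=1$) and $(B/K)'\cong B'/(B'\cap K)=B'$, so that $B/Z(B)\cong (B/K)/Z(B/K)$ with matching commutator maps; thus $B\sim B/K$, i.e. $M/(M'\cap P)\sim M/(M\cap P)$, and the equality of the numbers of centralizers follows from \cite{Z2}.

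The step I expect to be the main obstacle is the containment $[M\cap P,M]\subseteq P$ needed to land the commutators inside $M'\cap P$ rather than merely inside $M'\cap M_p$; this is precisely where the nilpotency of $M$ is essential, since it forces the $p$-part $M\cap P=M_p$ to sit inside $P$, and it is the point that would require the most care to state cleanly.
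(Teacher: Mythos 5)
Your overall route is essentially the paper's: both arguments identify $M\cap P$ with the normal Sylow $p$-subgroup of the nilpotent group $M$ and then pass from $M/(M'\cap P)$ to $M/(M\cap P)$ by an isoclinism. The only structural difference is that the paper black-boxes the isoclinism step as Lemma 2.2 of \cite{B}, whereas you prove it from scratch via the standard criterion ($K\cap B'=1$ forces $K\le Z(B)$ and $B\sim B/K$) and then invoke \cite{Z2}; that is a legitimate and self-contained substitute. One simplification you missed: once $M\cap P\unlhd M$ is known, centrality of $K$ is immediate, since $[M\cap P,M]\le (M\cap P)\cap M'=M'\cap P$ by normality alone --- the entire direct-product computation with $M=M_p\times M_{p'}$ is unnecessary, and indeed $K\cap B'=1$ already implies $[K,B]\le K\cap B'=1$ for any normal $K$.

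The genuine weak point is the claim you flag at the end and then wave away: that nilpotency of $M$ forces $M\cap P=M_p\le P$. It does not. Nilpotency gives $M\cap P\le M_p$ (every $p$-subgroup of $M$ lies in its unique Sylow $p$-subgroup), but $M_p$ is only guaranteed to lie in \emph{some} Sylow $p$-subgroup of $G$, not in the particular $P$ given in the statement; e.g.\ $G=S_3$, $P=\langle(12)\rangle$, $M=\langle(13)\rangle$ has $M\cap P=1\ne M_p$. Consequently neither your argument nor the paper's actually establishes $M\cap P\unlhd M$ (equivalently, that the quotient $M/(M\cap P)$ is even defined) for an arbitrary Sylow $p$-subgroup $P$ of $G$; the paper's assertion that ``$P\cap M$ is a $p$-Sylow subgroup of $M$'' has exactly the same defect, since that fact requires $M$ normal or $P$ suitably chosen. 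So you share the paper's gap rather than introducing a new one; the honest fix is either to add the hypothesis $M\cap P\unlhd M$ (after which your isoclinism argument goes through verbatim and more simply, as above), or to choose $P$ so that $P\cap M\in Syl_p(M)$.
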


\begin{proof}
Assume that $P$ is a $p$-sylow subgroup and $M$ is a nilpotent subgroup of $G$, then $P\cap M$ is a $p$-sylow subgroup of $M$ and $P\cap M\unlhd M$. We put $K= M\cap P$, hence $K\cap M'= M'\cap P$. By  Lemma $2.2$ of \cite{B}, $|Cent( \frac{M}{K})|=|Cent(\frac{M}{K\cap M'})|=|Cent(\frac{M}{M'\cap P})|$.
\end{proof}

\begin{prop}
Let $G$ be a non-solvable group with a nilpotent maximal subgroup $C_G(x)$ for some $x\in G$. If a $2$-sylow subgroup of $C_G(x)$ is metacyclic and $G'\cap F(G)=1$, then $|Cent(G)|\ge 107$.
\end{prop}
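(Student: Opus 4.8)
The plan is to turn the hypotheses into a classification statement, use $G'\cap F(G)=1$ to discard the non-simple candidates, and then compute $|Cent(G)|$ on the short list that remains. Write $M:=C_G(x)$. Since $M$ is a nilpotent maximal subgroup of the non-solvable group $G$, Thompson's theorem (a finite group with a nilpotent maximal subgroup of odd order is solvable) forces $|M|$ to be even, so the metacyclic Sylow $2$-subgroup $S$ of $M$ is non-trivial, and $x\in Z(M)$. The engine of the proof is the classification of finite non-solvable groups possessing a nilpotent maximal subgroup whose Sylow $2$-subgroup is metacyclic (built on the work of Thompson and Janko on nilpotent maximal subgroups together with the Gorenstein--Walter and Alperin--Brauer--Gorenstein description of groups with dihedral, quaternion or semidihedral Sylow $2$-subgroups). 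Applying it, $G$ is reduced to the family $\mathrm{SL}(2,q)$, $\mathrm{PSL}(2,q)$, $\mathrm{PGL}(2,q)$, where $q$ is a prime power for which the dihedral centralizer of an involution is a $2$-group.

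Next I would exploit $G'\cap F(G)=1$. Since $F(G)\trianglelefteq G$ we have $[F(G),G]\le F(G)\cap G'=1$, hence $F(G)\le Z(G)$ and therefore $F(G)=Z(G)$. This kills every genuinely central extension: in $\mathrm{SL}(2,q)$ the central involution lies in $G'\cap F(G)$, so $\mathrm{SL}(2,q)$ is excluded, and we are left with the groups of trivial Fitting subgroup $\mathrm{PSL}(2,q)$ and $\mathrm{PGL}(2,q)$.

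Then I would pin down $q$ from the nilpotency and maximality of $M$. In $\mathrm{PSL}(2,q)$ and $\mathrm{PGL}(2,q)$ the only centralizers that can be nilpotent and maximal are the dihedral involution centralizers that are actually $2$-groups; this requires $q\mp1$ to be a power of $2$ and the corresponding dihedral group not to be absorbed into a subgroup $A_4$, $S_4$ or $A_5$. Running through the possibilities, the smallest admissible group is $\mathrm{PGL}(2,7)$, in which $M=D_{16}$ is a metacyclic maximal $2$-group equal to $C_G(z)$ for its central involution $z$.

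Finally I would compute $|Cent(G)|$ on this list. For $G=\mathrm{PGL}(2,7)$ of order $336$, every non-identity element is a regular element of a maximal torus, is unipotent, or is an involution, and tallying the distinct centralizers gives the whole group, the $8$ Sylow $7$-subgroups of order $7$, the $28$ split tori of order $6$, the $21$ non-split tori of order $8$, the $28$ dihedral groups $D_{12}$ centralizing the outer involutions, and the $21$ dihedral groups $D_{16}$ centralizing the inner involutions, that is $1+8+28+21+28+21=107$. A parallel count for the other surviving groups (for instance $\mathrm{PSL}(2,17)$, where one already gets $461$) shows they all have strictly more centralizers, so $|Cent(G)|\ge107$, with equality exactly at $\mathrm{PGL}(2,7)$. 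The main obstacle is the first step: one must invoke the classification precisely and, just as importantly, make sure the resulting list of groups is exhaustive, since the bound $107$ is sharp and a single missed small group could lower it.
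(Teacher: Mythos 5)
Your overall strategy (classify, then count) matches the paper's, and your hand count $1+8+28+21+28+21=107$ for $\mathrm{PGL}(2,7)$ agrees with the paper's GAP computation. But there is a genuine gap in your classification step. The theorem you are implicitly invoking (Baumann's Theorem 4, which is what the paper cites) classifies $G/F(G)$ for a non-solvable group with a nilpotent maximal subgroup whose Sylow $2$-subgroup is metacyclic; it does not classify $G$ itself. Your patch via $[F(G),G]\le F(G)\cap G'=1$, hence $F(G)=Z(G)$, is correct as far as it goes, but it does not reduce $G$ to the list $\mathrm{PSL}(2,q)$, $\mathrm{PGL}(2,q)$: for example $G=C_5\times \mathrm{PGL}(2,7)$ satisfies every hypothesis of the proposition (it is non-solvable, $C_G(x)=C_5\times D_{16}$ is a nilpotent maximal subgroup with metacyclic Sylow $2$-subgroup, $F(G)=C_5$ and $G'=\mathrm{PSL}(2,7)$ intersect trivially) yet is on nobody's list of almost simple groups. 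The paper closes exactly this gap in one line: by Lemma 2.2 of Baishya, $G'\cap F(G)=1$ forces $|Cent(G)|=|Cent(G/F(G))|$, and only then is the classification applied, to $G/F(G)$. Your argument needs that reduction (or an explicit proof that central direct factors do not change the number of centralizers) before the counting step is legitimate.

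Two smaller issues. First, your candidate list omits $H(9)$, which appears in Baumann's classification of $G/F(G)$ alongside $\mathrm{PSL}(2,p)$, $\mathrm{PGL}(2,p)$ ($p$ a Fermat or Mersenne prime, $p\ge 17$), $\mathrm{PGL}(2,7)$ and $\mathrm{PGL}(2,9)$; since the bound $107$ is attained, every omitted case is a potential counterexample and must be checked. Second, the claim that all remaining groups have strictly more than $107$ centralizers is only illustrated for $\mathrm{PSL}(2,17)$; a uniform lower bound in terms of $p$ (or a finite check of the sporadic cases plus a monotonicity argument for the two infinite families) is needed to finish. To be fair, the paper is equally terse on this last point, but your write-up correctly identifies it as the place where exhaustiveness matters.
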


\begin{proof}
As $G'\cap F(G)=1$, then $|Cent(G)|=|Cent(\frac{G}{F(G)})|$ by Lemma $2.2$ of \cite{B}. Besides from Theorem $4$ of \cite{Ba}, $\frac{G}{F(G)}$ is isomorphic to one of the following groups:\\
$PSL(2,p)$, $PGL(2, p)$, $PGL(2,7)$, $PGL(2,9)$, $H(9)$, where $p$ denotes a Fermat or Mersenne prime with $p\ge17$. Using the GAP  \cite{GAP} one can observe that $|Cent(PGL(2,7))|=107$ and this concludes the proof.
\end{proof}

\begin{lemma}
Let $G$ be a group and $G'\cap Z(G)=1$. If $\frac{G}{Z(G)}$  is perfect, then $|Cent(G)|=|Cent(G')|$.
\end{lemma}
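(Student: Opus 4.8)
The plan is to combine the already-cited Lemma 2.2 of \cite{B} with the structural consequence of the two hypotheses. First I would record that $Z(G)$ is a normal subgroup of $G$ satisfying $Z(G)\cap G'=1$. This is exactly the situation in which Lemma 2.2 of \cite{B} applies (it was used in the same way above for $F(G)$), so it immediately yields $|Cent(G)|=|Cent(\frac{G}{Z(G)})|$. The whole problem is thereby reduced to identifying the quotient $\frac{G}{Z(G)}$ with $G'$.

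The key step is to extract a direct product decomposition from the perfectness assumption. Since $\frac{G}{Z(G)}$ is perfect, its derived subgroup equals the whole quotient; but the derived subgroup of $\frac{G}{Z(G)}$ is $\frac{G'Z(G)}{Z(G)}$, so perfectness forces $G'Z(G)=G$. Now $G'$ and $Z(G)$ are both normal in $G$, their product is all of $G$, and they intersect trivially by hypothesis. Hence $G=G'\times Z(G)$ as an internal direct product, and the resulting projection gives an isomorphism $\frac{G}{Z(G)}\cong G'$, whence $|Cent(\frac{G}{Z(G)})|=|Cent(G')|$.

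Combining the two steps closes the argument: $|Cent(G)|=|Cent(\frac{G}{Z(G)})|=|Cent(G')|$. I expect the only point requiring genuine care to be the verification that the derived subgroup of $\frac{G}{Z(G)}$ is precisely $\frac{G'Z(G)}{Z(G)}$, so that perfectness is equivalent to $G=G'Z(G)$; everything else is a routine application of the cited lemma together with the standard internal direct product criterion. As an alternative to invoking Lemma 2.2 of \cite{B}, one could argue directly from $G=G'\times Z(G)$ by checking that $C_G(az)=C_{G'}(a)\times Z(G)$ for $a\in G'$, $z\in Z(G)$, and that $C_G(az)\cap G'=C_{G'}(a)$ using $G'\cap Z(G)=1$; this exhibits an explicit bijection between $Cent(G)$ and $Cent(G')$ and gives the count without any external reference.
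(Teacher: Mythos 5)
Your proposal is correct and rests on the same key observation as the paper's proof: perfectness of $\frac{G}{Z(G)}$ together with $G'\cap Z(G)=1$ forces $G=G'\times Z(G)$, from which the equality of centralizer counts follows. The paper states this in one line (leaving the final counting step implicit, exactly as in your alternative direct argument), while your detour through Lemma 2.2 of \cite{B} is a harmless extra wrapper around the same idea.
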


\begin{proof}
Since $\frac{G}{Z(G)}$ is perfect and $G'\cap Z(G)=1$, so $G=G'\times Z(G)$. It implies that $|Cent(G)|=|Cent(G')|$.

\end{proof}

\end{document}